\documentclass[a4paper,12pt]{article}

\usepackage{epsfig}
\usepackage{amsfonts,eucal}
\usepackage{amssymb,amsmath,amsthm,color}
\usepackage[normalem]{ulem}

\usepackage[T2A]{fontenc}
\usepackage[cp1251]{inputenc}
\newtheorem{remark}{Remark}[section]
\newtheorem{theorem}{Theorem}[section]

\newtheorem{lemma}{Lemma}[section]

\newcommand{\Ga}{\Gamma}
\newcommand{\ga}{\gamma}
\newcommand{\BB}{{\mathbb B}}
\newcommand{\XX}{{\mathbb X}}
\newcommand{\R}{{\mathbb R}}
\newcommand{\N}{{\mathbb N}}

\newcommand{\X}{{\R^d}}
\newcommand{\XXX}{{\mathfrak{X}}}
\newcommand{\La}{\Lambda}

\newcommand{\B}{\mathcal{B}}

\newcommand{\M}{\mathcal{M}}

\begin{document}

\title{Persistence in perturbed contact models in continuum}

\author{Sergey Pirogov\thanks{
Institute for Information Transmission Problems, Moscow, Russia
(s.a.pirogov@bk.ru).} \and Elena Zhizhina\thanks{Institute for
Information Transmission Problems, Moscow, Russia (ejj@iitp.ru).} }

\date{}

\maketitle

\begin{abstract}
Can a local disaster lead to extinction? We answer this question in this work.
In the paper \cite{PZ-PPI} we considered contact processes on locally compact metric spaces with state dependent birth and death rates and formulated sufficient conditions on the rates that ensure the existence of invariant measures. One of the crucial conditions in \cite{PZ-PPI} was the critical regime condition, which meant the existence of a balance between birth and death rates in average. In the present work, we reject the criticality condition and suppose that the balance condition is violated. This implies that the evolution of the correlation functions of the contact model under consideration is determined by a nonlocal convolution type operator perturbed by a (negative) potential. We show that local peaks in mortality do not typically lead to extinction. We prove that a family of invariant measures exists even without the criticality condition and these measures can be described using the Feynman-Kac formula.
\\

Keywords: inhomogeneous contact model, stationary measure, correlation functions, Feynman-Kac formula, persistence and extinction.
\end{abstract}

\section{Introduction}

The contact processes on the lattice have been introduced in the pioneer papers of Harris \cite{H}, Holley and Liggett \cite{HL}, see also the monograph of Liggett \cite{Lig1985}.
In recent years, the study of contact processes running in continuous spaces has attracted great interest,
see e.g. \cite{KKP, KKPZ, KS}. This class of processes is a particular case of continuous time birth and death processes, and particle configurations in continuum appeared to be more adapted to modeling evolutions in various biological systems and models of population dynamics.

Taking into account applications of the contact processes as models describing a spread of epidemic diseases or a population growth, one of the main problem under consideration is to determine the parameters of the model providing stationary regime and to prove the existence of invariant measures.  The appearance of limiting invariant states was proved only in the so-called critical regime, see e.g. \cite{PZ-EJP, PZ-PPI}, that is, when there is a certain stochastic  balance between birth and death.
As was shown in \cite{KKP}, there exists a continuum of invariant measures for the contact processes in $\X, \ d\ge 3,$ in the critical regime with a constant death rates. In small dimensions $d=1,2,$ the existence of the stationary regime depends on the behavior of the dispersal kernel at infinity. It was proved in \cite{KKPZ} that for the contact processes in $\X, \ d= 1,2,$ invariant measures exist only if the dispersal kernel has a heavy tail at infinity. In the case of light tails the pair correlation function grows to infinity as $t \to \infty$, and hence invariant measures do not exist. Thus heavy tails of dispersal kernels appear to make the critical regime more stable contrary to light tails.

The existence of invariant measures in the marked contact model in $\X, \ d\ge 3,$ with a compact spin space and for constant death rates was proved in \cite{KPZh}. Such models are used, in particular, to describe evolution in quasi-species populations with mutations, see \cite{Nov}.

A more general framework has been investigated in \cite{PZ-EJP}, where we have formulated conditions providing the existence of a one-parameter family of invariant measures in contact processes on general locally compact separable metric spaces. One of the conditions, the so-called transience condition, was formulated in terms of the associated Markov jump process. It means that any pair of independent trajectories of this jump process run away from each other. The transience condition is not needed for a special class of contact processes - the contact processes with immigration. For such processes a unique invariant measure exists in any dimension, see \cite{CHM}, \cite{PZh-MMJ}.


In the present work, we consider contact processes without the assumption of criticality.
We will show that local violation of criticality due to excess mortality does not destroy invariant measures, but only perturbs them. A similar phenomenon exists in the theory of Schr\"{o}dinger operators, when local positive potentials do not change a spectrum of Laplacian, see e.g. \cite[Chapter 13]{RS4}. 
In our study of the contact model, we need not only to solve the equation for the first correlation function specified by a  Schr\"{o}dinger type operator, but also to explore an infinite chain of hierarchical equations for all correlation functions of the model. Thus, the first step of our research is to construct the first correlation function of the stationary system using the Feynman-Kac formula, and then we find  the solution of the whole chain by the inductive procedure.


The case studied here is unlike to the case from \cite{BPZ1}, \cite{BPZ2}, \cite{KMPZ}, where local negative fluctuations of the mortality can lead to an exponential popultion growth in any compact region.

In proving the main results of this work, we adapt the arguments from \cite{KKP, KPZh, PZ-EJP} developed for contact models in the critical regime to the analysis of contact models in the absence of the critical regime condition.
The remainder of this paper is organised as follows. In Section 2, we introduce the model and formulate assumptions on the model. We formulate the main result in Section 3. In Section 4, we give the proof of the main theorem.

\section{The model}


Let $\XXX$ be a locally compact separable metric space, ${\B}({\XXX})$ be its Borel $\sigma$-algebra,
and $m$ will denote a locally finite  Borel measure on ${\B} (\XXX)$, i.e. $m$ is finite on compact sets.
Denote by $\M(\XXX)$ the space of locally finite Borel measures on $\B(\XXX)$ and by ${\B}_{\mathrm{b}} ({\XXX})$
the system of all compact sets from ${\B}({\XXX})$.

A configuration $\gamma \in \Ga(\XXX)$ on $\XXX$ is a finite or countably infinite locally finite
unordered set of points in $\XXX$, and some of them can be multiple, i.e. repetitions are permitted.
If the measure $m \in \M(\XXX)$ is atomic then  $\Ga(\XXX)$ contains configurations with multiple points. This case is realized on graphs with a counting measure $m$.
For the continuous contact models, when $m$ is non-atomic (see e.g. \cite{KKP, KKPZ, KS}), as the phase space $\Ga$
one can take the set of locally finite configurations in $\XXX$ with distinct elements:
\begin{equation} \label{confspace}
\Ga_c =\Ga_c\bigl(\XXX\bigr) :=\Bigl\{ \ga \subset \XXX \Bigm| \ | \ga\cap\La| < \infty, \ \mathrm{for \ all } \ \La \in {\B}_{\mathrm{b}}
(\XXX)\Bigr\},
\end{equation}
where $|\cdot|$ denotes the number of elements of a set.

We can identify each $\ga\in\Ga$ with an integer-valued measure $\sum_{x\in
\gamma }\delta_x\in \M(\XXX)$, where $\delta_x$ is
the Dirac measure with unit mass, and the sum is taken considering the multiplicity of elements in the configuration $\gamma$.
For any $ \La \in {\B}_{\mathrm{b}}(\XXX)$ we denote by $|\gamma \cap \La|$ the value $\gamma(\La)$ of the measure $\gamma$ on $\Lambda$.

The contact model is a continuous time Markov process on $\Gamma(\XXX)$ which is a
particular case of a general birth-and-death process.
The model is given by a heuristic
generator defined on a proper class of functions  $F:
\Gamma \to \R$ as follows:
\begin{align}
 \begin{aligned}\label{generator}
(L F)(\gamma) & = \sum_{ x \in \gamma} U(x) \left(F(\gamma \backslash x) - F(
\gamma)\right) \\
& +   \int\limits_{\XXX} \sum_{x \in \gamma}  a(y,x) (F(\gamma \,\cup
y ) - F(\gamma)) m(dy).
 \end{aligned}
 \end{align}
Notations $\gamma \backslash x$ and $\gamma \,\cup x$  in \eqref{generator} stand for removing and adding one particle at position $x \in \XXX$. Similarly, $x \in \gamma$ refers to any particle in the configuration $\gamma$.
The first term in \eqref{generator} corresponds to the death of a particle at position $x$: each element $x \in \gamma$ of the configuration $\gamma\in\Ga$ can die with the death rate
$$
U(x) = V(x)+ W(x),
$$ where the critical regime condition (see \eqref{2a} below) is valid  for $V(x) > 0$, and $W(x) \ge 0$ is a non-negative local perturbation of $V(x)$, see condition \eqref{5a} below. The second term of \eqref{generator} describes the birth of a new particle in a neighborhood $dy$ of the point $y$ with the
birth density $A(y,\gamma):=\sum_{x \in \gamma} a(y,x)$. It worth noting that this form of the birth rates suggests that it is not known who is the parent of the new particle, since the birth of a new particle at position $y$ has a cumulative rate $\sum_{x \in \gamma}  a(y,x)$.  Function $a(x,y)$ is also called the dispersal kernel.
\medskip


We formulate now assumptions on the birth and death rates that provide the existence of invariant measures.

1. {\it Measurability } condition. Let   $a:\XXX \times \XXX \to [0, \infty)$, $W:  \XXX \to [0, \infty)$ be non-negative bounded measurable functions, and $V: \XXX \to (0, \infty)$ is a strictly positive bounded measurable function:
  \begin{equation}\label{V}
  0 \ < \  V_{{\rm min}} = \inf\limits_{\XXX} V(x) \ \le \ V(x) \ \le  \ \sup\limits_{\XXX} V(x) = V_{{\rm max}} \ < \ \infty;
\end{equation}
 \begin{equation}\label{W}
   0 \ \le \  W_{{\rm min}} = \inf\limits_{\XXX} W(x) \ \le \ W(x) \ \le  \ \sup\limits_{\XXX} W(x) = W_{{\rm max}} \ < \ \infty.
\end{equation}

2. {\it Regularity} condition. There exists a constant $C>0$, such that
\begin{equation}\label{2a-bis}
\sup\limits_{x \in \XXX} \ \int\limits_{\XXX} a(y,x) \, 
m( dy) \ < \ C;
\end{equation}

3. {\it Critical regime} condition for $V$. There exists a strictly positive bounded measurable function $\Psi(x), \; \Psi(x) \ge p_0>0$ such that
\begin{equation}\label{2a}
 \int\limits_{\XXX} a(x,y)\, \Psi(y) \, m(dy) \ = \ V(x) \, \Psi(x) \quad \mbox{for all } x \in \XXX.
\end{equation}
Critical regime condition is a balance condition between birth and death rates for the unperturbed system if particles are distributed according the measure $\Psi(y) \, m(dy)$.
Define the new measure $\bar{\mathfrak{m}}$ and the new intensity of birth $ b(x,y)$ as follows
\begin{equation}\label{nb}
\bar{\mathfrak{m}}(dy)  = \Psi(y) m(dy), \qquad b(x,y) = \frac{a(x,y)}{\Psi(x)},
\end{equation}
then relation  \eqref{2a} is rewritten as
\begin{equation}\label{cr}
 \int\limits_{\XXX} b(x,y)  \bar{\mathfrak{m}}(dy) \ = \ V(x) \quad \mbox{for all } x \in \XXX.
\end{equation}

4. {\it Transience} condition.
Let us consider the continuous time jump Markov process with generator
\begin{equation}\label{LL}
\mathcal{L} f(x) =  \int\limits_{\XXX} b(x,y) \big( f(y) - f(x) \big)  \bar{\mathfrak{m}}(dy).
\end{equation}
Then we assume that for two independent copies $X(t)$ and $Y(t)$ of this process starting with $X(0)=x$ and $Y(0)=y$ the following condition holds
\begin{equation}\label{2b}
\sup\limits_{x,y} \ \int_{0}^{\infty} \mathbb{E}_{x,y} b(X(t), Y(t)) dt < H
\end{equation}
with a constant $H>0$. Moreover, we assume that the integral in \eqref{2b} converges uniformly in $x$, $y$.

5. {\it W-integrability} condition. The perturbation $W(x)$ is non-negative, bounded and satisfies estimate
\begin{equation}\label{5a}
\sup\limits_{x}  \int_{0}^{\infty} \mathbb{E}_{x} W(X(t)) dt < L
\end{equation}
with some $L>0$, and the integral converges uniformly in $x$. Here as above $X(t)$ is the trajectory of the process with generator \eqref{LL} starting at $x$.
\medskip



\begin{remark}\label{R1}
It worth noting that the form of the generator in \eqref{LL}  as well as the transience condition \eqref{2b} do not depend on the function $W(x)$.
\end{remark}

\begin{remark}\label{R2}
The sufficient condition for \eqref{2b} together with required uniform convergence reads
\begin{equation}\label{suf-cond}
\int\limits_0^{\infty}  \sup\limits_{x, y} \ \mathbb{E}_x b(X(t), y) dt < H
\end{equation}
\end{remark}

\begin{proof} Denote by  $p (x,dy,t)$ the transition function of the Markov jump process with generator \eqref{LL} at time $t$. Then we get
$$
\sup\limits_{x,y} \ \int\limits_{0}^{\infty} \mathbb{E}_{x,y} b(X(t), Y(t)) dt =
\sup\limits_{x,y} \ \int\limits_0^{\infty} \int\limits_{\XXX} \int\limits_{\XXX} b(x', y') p(x, dx',t) p(y, dy',t) dt \le
$$
$$
\sup\limits_{y} \ \int\limits_0^{\infty} \int\limits_{\XXX}  \Big( \sup\limits_{x} \  \int\limits_{\XXX}  b(x', y') p(x, dx', t) \Big)  p(y, dy',t)  dt =
$$
$$
\sup\limits_{y} \ \int\limits_0^{\infty}  \int\limits_{\XXX}  \ \Big( \sup\limits_{x} \ \mathbb{E}_x b(X(t), y')  \Big)  p(y, dy',t) dt \le
$$
$$
\int\limits_0^{\infty} \sup\limits_{y} \ \int\limits_{\XXX}  \Big( \sup\limits_{y'} \sup\limits_{x} \ \mathbb{E}_x b(X(t), y') \Big)  p(y, dy',t) dt =
\int\limits_0^{\infty}  \sup\limits_{x, y'} \ \mathbb{E}_x b(X(t), y') dt.
$$
Therefore, condition \eqref{suf-cond} implies the uniform convergence in \eqref{2b}.
\end{proof}

\begin{remark}\label{R3}
The sufficient condition for \eqref{5a} reads
\begin{equation}\label{R3-suf}
\int\limits_0^{\infty}  \sup\limits_{x} \ \mathbb{E}_x W(X(t)) dt < L.
\end{equation}
\end{remark}

It is easy to see that both estimates \eqref{suf-cond} and \eqref{R3-suf} are valid if $\XXX = \mathbb{R}^d, \ d \ge 3, \ a(x,y) = a(x-y)$ is a bounded probability density, $V(x) \equiv 1, \ \Psi(x) \equiv 1,$ and $W(x) \in L^1(\mathbb{R}^d) \cap L^{\infty}(\mathbb{R}^d)$. The proof of this fact follows from the standard (Polya-type) arguments on the transience of non-degenerate homogeneous random walks in $\mathbb{R}^d, \ d \ge 3$,
see e.g. \cite{LL}, \cite{Petrov}, \cite{PZ-PPI}. 
In the case of small dimensions $d=1,2,$ estimates \eqref{suf-cond} and \eqref{R3-suf} hold if the density $a(x,y) = a(x-y)$ has heavy enough tails, see \cite{KKPZ}.


\section{Time evolution of correlation functions. Main results}

Denote by ${\cal M}_{fm}(\Gamma)$ the set of all probability
measures $\mu$ which have finite local moments of all orders, i.e.
$$
\int_{\Gamma} |\gamma \cap \Lambda |^{n} \ \mu (d \gamma) \ < \ \infty
$$
for  all $\Lambda \in {\cal B}_b(\XXX)$ and $n \in N$.

Together with the configuration space $\Gamma$ we define the space  $\Gamma_0$ of finite configurations, and let $\Gamma^{(n)}_{0,\Lambda} = \{\eta \subset \Lambda: \ |\eta| = n  \}$ be the set of $n$-point configurations in $\Lambda \in {\cal B}_b(\XXX)$.
If a measure $ \mu \in {\cal M}_{fm}(\Gamma)$ is locally absolutely continuous with respect  to the Lebesque-Poisson measure
$$
\lambda_{z, \, \bar{\mathfrak{m}}} \ = \ \sum\limits_{n=0}^{\infty} \frac{z^n}{n!} \, {\bar{\mathfrak{m}}}^{\otimes n}, \quad \mbox{i.e. } \;
\lambda_{z, \, \bar{\mathfrak{m}}}(\Gamma^{(n)}_{0,\Lambda}) \ = \ \frac{z^n \,(\bar{\mathfrak{m}}(\Lambda))^n}{n!},
 \; n=0,1, \ldots,
$$
where $\bar{\mathfrak{m}}(\Lambda)= \int_{\Lambda} \bar{\mathfrak{m}}(dx)$, then there exists the corresponding system of the correlation functions, i.e.
densities of the correlation measure with respect to the Lebesque-Poisson measure.
The terminology originates in statistical mechanics, see, for instance, \cite[Ch. 4]{R}.
Denote by ${\cal M}_{\rm{corr}}(\Gamma)$  the subclass of the class ${\cal M}_{fm}(\Gamma)$ consisting of probability measures on $\Ga$ for which correlation functions exists.

The evolution equation for the system of $n$-point correlation functions corresponding to the continuous contact model in $\XXX$ has the following recurrent forms, see e.g. \cite{KKP, PZ-EJP}:
\begin{equation}\label{59}
\frac{\partial k_{t}^{(n)}}{\partial t} \ = \ \hat L_n^{\ast} k_{t}^{(n)} - W_n k_{t}^{(n)}
+  f_{t}^{(n)}, \quad n\ge 1; \qquad k_{t}^{(0)} \equiv 1,
\end{equation}
where
\begin{align}
 \begin{aligned}\label{korf}
 \hat L^{\ast}_n k_t^{(n)}(x_1, &\ldots, x_n)  =  - \Big( \sum_{i=1}^n  V(x_i) \Big) k_t^{(n)}(x_1,
\ldots, x_n) \\
& + \sum_{i=1}^n \int\limits_{\XXX} b(x_i, y) k_t^{(n)}(x_1, \ldots, x_{i-1}, y,
x_{i+1}, \ldots, x_n) \bar{\mathfrak{m}}(dy),
\end{aligned}
\end{align}
and $W_n$ is the operator of multiplication
\begin{equation}\label{W-1}
W_n k_{t}^{(n)} (x_1, \ldots, x_n) \ = \  \Big( \sum_{i=1}^n  W(x_i) \Big)  k_t^{(n)}(x_1,
\ldots, x_n).
\end{equation}
Denote
\begin{equation}\label{Sn}
S_n =  \hat L_n^{\ast} - W_n.
\end{equation}
Then \eqref{59} is rewritten as
 \begin{equation}\label{59W}
\frac{\partial k_{t}^{(n)}}{\partial t} \ = \ S_n k_{t}^{(n)}
+  f_{t}^{(n)}, \quad n\ge 1.
\end{equation}
Here $f_{t}^{(n)}$ are functions on $\XXX^{n}$ defined for $n \ge 2$ by
\begin{equation}\label{f}
f_{t}^{(n)}(x_1, \ldots, x_n) \ = \ \sum_{i=1}^n  k_{t}^{(n-1)}(x_1,
\ldots,\check{x_i}, \ldots, x_n) \sum_{j\neq i} b(x_i, x_j),
\end{equation}
and  $f_{t}^{(1)} \equiv 0$. The notation $\check{x_i}$ means that this coordinate is excluded.

Note that the operator $S_n = \hat L_n^{\ast} - W_n$ is similar to the $n$-particle Schr\"{o}dinger operator with the potential $W_n$ taken with a minus sign.

Let $\XX_{n} = \BB({\XXX}^n)$ be the Banach space of all measurable real-valued bounded functions on $\XXX^n$ with the $\sup$-norm.
Consider the operator $S_n$ as an operator on the Banach space $\XX_{n}$ for any $n\geq 1$. Then it is a bounded linear operator in $\XX_{n}$, and the arguments based on the variation of parameters formula yields that
\begin{equation}\label{61A}
k_{t}^{(n)} \ = \ e^{t S_n} k_{0}^{(n)} \ + \  \int\limits_0^t e^{(t-s) S_n} f_s^{(n)} \ ds,
\end{equation}
where $f_s^{(n)}$ is expressed through $k_s^{(n-1)}$ by (\ref{f}).
Thus, the solution to the Cauchy problem \eqref{59} in  $\XX_{n}$ with arbitrary initial values $k_{0}^{(n)}\in\XX_{n}$ exists and is unique provided $f_{t}^{(n)}$ is constructed recurrently via the
solution to the same Cauchy problem \eqref{59} for $n-1$.

The goal of this paper is to prove the existence of a family of invariant measures for the contact processes that are out of critical regime.
These measures are described in terms of the corresponding correlation functions
$\{k^{(n)}\}_{n\geq 0}$ as solutions to the following system: 
\begin{equation}\label{Last}
S_n k^{(n)} + f^{(n)}=0, \quad n \ge 1, \quad
k^{(0)}\equiv 1,
\end{equation}
where $S_n, \, f^{(n)}$ were defined by \eqref{korf}-\eqref{f}.
In the sequel, we say that $k:\Ga_{0}\to \R$ solves the system \eqref{Last} in the Banach spaces $(\XX_{n})_{n\geq 1}$ if the corresponding $k^{(n)}\in\XX_{n}$, $n\geq 1$ solve \eqref{Last}.

The main result of the paper is the following theorem.

\begin{theorem}\label{main} {\it Assume that all the above conditions \eqref{V} - \eqref{5a} are fulfilled.
Then the following assertions hold.

{(i)} There exists a one-parameter set of probability measures $\mu^{\varrho} \in {\cal M}_{\rm{corr}}(\Gamma)$ on $\Ga$  depending on the parameter $\varrho >0$ such that the
correlation functions $k_{\varrho}: \Ga_{0}\to\R_{+}$ solve (\ref{Last})  in the Banach spaces $(\XX_{n})_{n\geq 1}$.
Moreover, the following bounds hold for all $(x_1, \ldots, x_n)\in {\XXX}^{n}$
\begin{equation}\label{estimate}
k^{(n)}_\varrho (x_1, \ldots, x_n) \ \le   D  {H}^n (n!)^2 \qquad \text{with } \; D = \sum\limits_{n=1}^{\infty} \frac{(\varrho/ H)^n}{(n!)^2}
\end{equation}
where $H$ is the same constant as in  (\ref{2b}).

{(ii)} Let $\{k_{ t}^{(n)}\}_{n\geq 1}$ be the
solution to the Cauchy problem (\ref{59}) with
 initial data $k_0 = \{k_0^{(n)} \}$ corresponding to the Poisson measure $\pi_\varrho$ with intensity $\varrho$:
\begin{equation}\label{k0}
k_0^{(0)}= 1, \quad k_0^{(n)}(x_1, \ldots, x_n) = \varrho^n,  \; n\ge 1.
\end{equation}
Then
\begin{equation}\label{Th1-2}
\| k_{t}^{(n)} \ - \ k_\varrho^{(n)} \|_{\XX_n} \ \to \ 0, \quad t \to \infty, \quad \forall n\geq 1.
\end{equation}
}
\end{theorem}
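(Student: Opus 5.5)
The plan is to construct the stationary correlation functions inductively from the Feynman--Kac representation of the semigroups $e^{tS_n}$, and then show convergence of the Cauchy problem by the same induction. The key observation is that $\hat L_n^{\ast}$ is exactly the generator of $n$ independent copies $X_1(t),\dots,X_n(t)$ of the jump process with generator \eqref{LL}. Indeed, by \eqref{cr},
\[
\hat L_n^{\ast} k(x)=\sum_i\int b(x_i,y)\bigl(k(\ldots,y,\ldots)-k(x)\bigr)\bar{\mathfrak m}(dy).
\]
Hence
\[
e^{tS_n}g(x)=\mathbb E_x\Bigl[\exp\Bigl(-\int_0^t\sum_i W(X_i(s))\,ds\Bigr)g(X(t))\Bigr],
\]
so $e^{tS_n}$ is positive and contractive on $\XX_n$.

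\textbf{Step $n=1$.} Since $f^{(1)}=0$, the equation reads $k_t^{(1)}=e^{tS_1}\varrho=\varrho\,\mathbb E_x\exp(-\int_0^t W(X(s))ds)$. This is monotone decreasing in $t$, so it converges to $k^{(1)}_\varrho(x)=\varrho\,\mathbb E_x\exp(-\int_0^\infty W(X(s))ds)$. The limit is strictly positive: by Jensen and \eqref{5a}, $k^{(1)}_\varrho\ge\varrho e^{-L}$, which is the persistence statement. Uniform convergence in $x$ follows from the uniform convergence in \eqref{5a}, via $|e^{-a}-e^{-b}|\le|a-b|$. The limit solves $S_1k^{(1)}=0$ by the Markov property: $e^{sS_1}k^{(1)}_\varrho=k^{(1)}_\varrho$ for all $s$, and differentiating at $s=0$ gives the equation, since $S_1$ is bounded.

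\textbf{Step $n\ge2$.} Set
\[
k^{(n)}_\varrho=\lim_{t\to\infty}e^{tS_n}\varrho^n+\int_0^\infty e^{sS_n}f^{(n)}_\varrho\,ds ,
\]
where $f^{(n)}_\varrho$ is built from $k^{(n-1)}_\varrho$ via \eqref{f}.
\begin{itemize}
\item The first term exists as a monotone limit, namely $\varrho^n\,\mathbb E_x\exp(-\sum_i\int_0^\infty W(X_i))$, which converges uniformly as in Step 1.
\item The integral converges because, by positivity and $W\ge0$, $0\le e^{sS_n}f\le e^{s\hat L^{\ast}_n}f$. Each term $k^{(n-1)}(\ldots)\,b(x_i,x_j)$ is bounded by $\|k^{(n-1)}\|\,\mathbb E\,b(X_i(s),X_j(s))$, and \eqref{2b} bounds its time integral by $H\|k^{(n-1)}\|$.
\end{itemize}
This gives the recursion $\|k^{(n)}\|\le\varrho^n+n(n-1)H\|k^{(n-1)}\|$. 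Dividing by $H^n(n!)^2$ and summing yields the bound \eqref{estimate}; I would tune the constant bookkeeping so that $D$ appears exactly as stated. That $S_nk^{(n)}_\varrho+f^{(n)}_\varrho=0$ follows from $e^{sS_n}k^{(n)}_\varrho=k^{(n)}_\varrho-\int_0^s e^{uS_n}f\,du$, differentiated at $s=0$.

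\textbf{Convergence (ii).} For part (ii) I would use induction on $n$ with formula \eqref{61A}, writing
\[
k_t^{(n)}-k_\varrho^{(n)}=\Bigl(e^{tS_n}\varrho^n-\lim_{t\to\infty} e^{tS_n}\varrho^n\Bigr)+\int_0^t e^{(t-s)S_n}\bigl(f_s-f_\varrho\bigr)ds-\int_t^\infty e^{uS_n}f_\varrho\,du .
\]
The first and last terms tend to $0$ by the above. For the middle term, $|f_s-f_\varrho|\le\|k_s^{(n-1)}-k_\varrho^{(n-1)}\|\sum_{i\neq j}b(x_i,x_j)$, and the induction hypothesis makes this prefactor small for $s\ge T$. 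I would split the integral at $T$:
\begin{itemize}
\item On $s\ge T$, the contribution is at most $\varepsilon\, n(n-1)H$, by positivity and \eqref{2b}.
\item On $s<T$, the contribution is at most $2\sup_s\|k_s^{(n-1)}\|\sup_{x,y}\int_{t-T}^\infty\mathbb E\,b\,du$, which tends to $0$ by the uniform convergence in \eqref{2b}. Here $\sup_s\|k_s^{(n-1)}\|<\infty$ by the same recursion.
\end{itemize}

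\textbf{Main obstacle.} The analytic steps above are routine once \eqref{2b} and \eqref{5a} are in hand. The hard part of (i) is showing that the functions $k^{(n)}_\varrho$ are correlation functions of a genuine probability measure $\mu^\varrho$. I would first try to obtain $\mu^\varrho$ as a weak limit of the time-$t$ distributions $\mu_t$ of the contact process started from $\pi_\varrho$. Their correlation functions are $k_t$, bounded uniformly as $\|k_t^{(n)}\|\le C H^n(n!)^2$, so the local moments are bounded and the family is tight. Convergence of $k_t$ then identifies the limit's correlation functions, provided moment determinacy holds. The $(n!)^2$ growth is too large for a direct Carleman argument. So I would instead pass to limits of nonnegative positive-definite combinations, i.e.\ the Lenard positivity conditions, which are preserved under uniform limits, and apply Lenard's theorem together with the growth bound to get existence; this is the step that needs the most care.
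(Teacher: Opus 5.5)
Your proposal is correct and follows essentially the paper's route. It uses Feynman--Kac for $k^{(1)}_\varrho$ and the inductive construction $k^{(n)}_\varrho=\int_0^\infty e^{sS_n}f^{(n)}_\varrho\,ds+\prod_i\Phi_\varrho(x_i)$; your $\lim_{t\to\infty}e^{tS_n}\varrho^n$ is exactly this product, by independence of the $X_i$. It then uses the comparison $e^{sS_n}f\le e^{s\hat L_n^{\ast}}f$ with \eqref{2b} to get the recursion and the constant $D$, the decomposition \eqref{64} for (ii), and Lenard positivity of $k_t$, passed to the limit, for the existence of $\mu^\varrho$.

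The differences are only technical. You get positivity and the comparison directly from the $n$-particle Feynman--Kac formula rather than the Trotter formula. You also write out the uniform convergence and the splitting of the middle term, which the paper delegates to earlier work. One side remark is off: a bound $C^n(n!)^2$ is not too large for Carleman-type arguments. It sits exactly at the borderline allowed by the Stieltjes--Carleman and Lenard growth conditions, which is precisely why Lenard's theorem applies here.
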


\medskip
The main strategy of the proof follows the same line as in \cite{PZ-EJP}. However, in the present paper we should modify the previous proof, considering the operator $S_n$ for any $n$ as a sum of the "unperturbed" generator $\hat L_n^{\ast}$ corresponding to the contact process in the critical regime and the "perturbation" $W_n$.

Theorem \ref{main} states that even in the "perturbed case", when $U(x) = V(x)+W(x)$, the invariant measures exist, and consequently local positive fluctuations $W(x)$ of mortality (with respect to the level of mortality $V(x)$ corresponding to the critical regime) does not lead to the extinction.

\section{The proof of Theorem \ref{main}}


The proof of the first statement of Theorem \ref{main} we start with construction of the first correlation function $k^{(1)}$ and then using the
induction in $n\in\N$ we obtain the solution $\{k^{(n)}\}_{n\geq 1}$ of the full system (\ref{Last}) recurrently.

For the first correlation function $k^{(1)}$ we get from \eqref{korf}, \eqref{Sn}  and (\ref{Last}) the following equation
\begin{equation}\label{8}
-U(x) k^{(1)}(x) + \int\limits_{\XXX} b(x,y) k^{(1)}(y) \bar{\mathfrak{m}}(dy) = 0,
\end{equation}
that can be written using the critical regime condition \eqref{cr}  as
\begin{equation}\label{8-bis}
\int\limits_{\XXX} b(x,y) \big( k^{(1)}(y)- k^{(1)}(x) \big) \bar{\mathfrak{m}}(dy) - W(x)k^{(1)}(x) = 0.
\end{equation}
To find a solution of \eqref{8-bis} we first study the corresponding evolution problem:
\begin{equation}\label{8-evol}
\frac{\partial \, u}{ \partial \, t} = {\cal L} u - W u, \qquad u(x,0)= \varrho, \; \; \varrho>0,
\end{equation}
where $ {\cal L}$ is the generator \eqref{LL} of the Markov jump process in $\XXX$. Equation \eqref{8-evol} is a variant of a non-local heat equation with absorption. The solution of \eqref{8-evol} at time $t$ is given by the Feynman-Kac formula
\begin{equation}\label{FK}
u_\varrho (x, t)\ = \ \varrho \, \mathbb{E}_x \big[ \exp \big(- \int\limits_0^t W(X(s)) ds \big) \big],
\end{equation}
see e.g.  \cite[Chapter III]{RW}, \cite[Chapter II]{Simon}. Here $X(t)$ is the trajectory of the process with generator \eqref{LL} starting at $x$.

It worth noting that the solution $u_\varrho (x, t)$ to \eqref{8-evol} is the same as the solution $k_t^{(1)}(x)$ of \eqref{59}.

We define the stationary solution $k^{(1)} = k^{(1)}_\varrho$ as the limit
\begin{equation}\label{FK-limit}
k^{(1)}_\varrho (x) \ = \ \lim_{t \to \infty} u_\varrho(x,t) \ = \ \varrho \, \mathbb{E}_x \big[ \exp \big(- \int\limits_0^\infty W(X(s)) ds \big) \big].
\end{equation}
We use the notation $k^{(1)}_\varrho$ to emphasize the dependence of the stationary solution on the parameter $\varrho>0$.
It follows from condition \eqref{5a} and the Jensen inequality that the limit \eqref{FK-limit} exists and strictly positive:
\begin{equation}\label{Phi-est}
\varrho \, e^{-L} \le  \varrho \, \mathbb{E}_x \big[ \exp \big(- \int\limits_0^\infty W(X(t)) dt \big) \big] \le \varrho.
\end{equation}
We denote this limit by
\begin{equation}\label{Phi}
\Phi_\varrho (x) =   \varrho \, \mathbb{E}_x \big[ \exp \big(- \int\limits_0^\infty W(X(t)) dt \big) \big].
\end{equation}
Thus formula \eqref{FK-limit} defines a positive bounded function $k_\varrho^{(1)}(x) = \Phi_\varrho(x)$, and
$$
S_1 \Phi_\varrho (x) = (\hat L_1^{\ast} - W_1) \Phi_\varrho (x)  = 0.
$$
Clearly $k_\varrho^{(1)}(x) $ is an element of $\XX_{1}$ and it solves \eqref{8-bis} (and \eqref{8}).
We notice that this function can be interpreted as the spatial density of particles of the stationary distribution,
and formulas \eqref{FK}-\eqref{FK-limit} mean that
$$
k_t^{(1)}(x) = e^{t S_1}  k_0^{(1)}(x) = e^{t S_1} \varrho \ \to \  \Phi_\varrho(x) = k_\varrho^{(1)}(x).
$$

{\bf Example.} If $\XXX = \mathbb{R}^d, \; d \ge 3,$ and  $W \in C_0(\mathbb{R}^d)$, i.e. $W$ has a compact support, then formula \eqref{FK-limit} implies that  $k^{(1)}(x) \to \varrho $ as $|x| \to \infty$.
\medskip

Next we will construct a solution to the system (\ref{Last}) satisfying estimates \eqref{estimate}.
Further we need the following lemma.


%

\begin{lemma} \label{3.2}
The operators $e^{t \hat L_n^{\ast}}$ and $e^{t S_n}$, where $\hat L_n^{\ast}$ and $S_n$ were defined in \eqref{korf} and \eqref{Sn} correspondingly, are
positive, i.e. it maps non-negative functions to non-negative functions, and
\begin{equation}\label{positive}
 e^{t S_n}f \ \le \ e^{t \hat L_n^{\ast}}f
\end{equation}
for all $f \ge 0$.
\end{lemma}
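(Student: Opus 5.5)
Both operators are bounded on $\XX_n$, so the plan is to split each into a constant multiple of the identity plus a positive operator, expand the exponentials as convergent power series, and compare the two semigroups through a Duhamel formula.

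\textbf{Positivity.} Write $\hat L_n^{\ast} = -\mathcal{V}_n + B_n$. Here $\mathcal{V}_n$ is multiplication by $\sum_i V(x_i)$, and $B_n$ is the integral part
\[
(B_n k)(x_1,\ldots,x_n)=\sum_{i=1}^n\int_{\XXX} b(x_i,y)\,k(x_1,\ldots,y,\ldots,x_n)\,\bar{\mathfrak{m}}(dy).
\]
The operator $B_n$ is positive because $b\ge 0$. It is bounded on $\XX_n$ by \eqref{cr}, which gives $\|B_n\|\le nV_{\max}$. Similarly, $S_n = -(\mathcal{V}_n+W_n)+B_n$.

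Set $c_n = n(V_{\max}+W_{\max})$. Then $\hat L_n^{\ast} = -c_n I + P_n$ with $P_n = B_n + (c_n - \sum_i V(x_i))$, and $S_n = -c_n I + Q_n$ with $Q_n = B_n + (c_n-\sum_i V(x_i)-\sum_i W(x_i))$. The multiplication parts of $P_n$ and $Q_n$ are nonnegative, so both operators are positive. Since $c_nI$ commutes with everything, $e^{tS_n}=e^{-c_nt}e^{tQ_n}$. The series $e^{tQ_n}=\sum_k t^kQ_n^k/k!$ converges in operator norm, and each term is positive, so $e^{tQ_n}$ is positive. The same argument applies to $e^{t\hat L_n^\ast}$.

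\textbf{Comparison.} Since $S_n = \hat L_n^{\ast} - W_n$ with bounded operators, the Duhamel formula holds:
\[
e^{t\hat L_n^{\ast}} f - e^{tS_n} f = \int_0^t e^{(t-s)\hat L_n^{\ast}}\, W_n\, e^{sS_n} f\, ds .
\]
To verify it, differentiate $s\mapsto e^{(t-s)\hat L_n^\ast}e^{sS_n}f$ in norm, which gives $e^{(t-s)\hat L_n^\ast}(S_n-\hat L_n^\ast)e^{sS_n}f$, and integrate from $0$ to $t$. For $f\ge0$, the function $e^{sS_n}f$ is nonnegative by the positivity step. Multiplication by $\sum_i W(x_i)\ge 0$ keeps it nonnegative, and $e^{(t-s)\hat L_n^{\ast}}$ is positive. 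The integrand is therefore pointwise nonnegative, and \eqref{positive} follows.

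An alternative to Duhamel is to compare the series directly: $0\le Q_n\le P_n$ as positive operators, so $Q_n^k f\le P_n^k f$ for $f\ge 0$ by induction, using positivity at each step.

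\textbf{Main obstacle.} The only delicate point is justifying positivity of the semigroups, since the generators themselves are not positive operators. The shift by $c_n$ handles this, and it works precisely because all the rates are bounded by \eqref{V}, \eqref{W} and \eqref{cr}. Everything else is routine bounded-operator calculus.
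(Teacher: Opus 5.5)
Your proof is correct, but it takes a different route from the paper.

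\textbf{Positivity.} The paper works one coordinate at a time. It writes $S_n=\sum_i S^i$ with $S^i=A^i-U(x_i)$ and applies the Trotter product formula $e^{tS^i}=\lim_N\bigl(e^{tA^i/N}e^{-tU/N}\bigr)^N$. This gives $e^{tS^i}f\ge e^{-tU_{\max}}e^{tA^i}f\ge 0$, and positivity of $e^{tS_n}=\prod_i e^{tS^i}$ follows. You instead shift the whole $n$-particle generator by one constant $c_n$ and expand $e^{tQ_n}$ as a norm-convergent series of positive operators. This avoids both the Trotter formula and the coordinatewise factorization. The two are really the same observation, since $e^{t(A^i-U)}=e^{-tU_{\max}}e^{t(A^i+U_{\max}-U)}$.

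\textbf{Comparison.} Here your argument does more than the paper's. The paper only asserts that \eqref{positive} follows ``immediately'' from $\hat L_n^{\ast}=S_n+W_n$ with $W_n\ge 0$. That step actually needs the positivity of $e^{t\hat L_n^{\ast}}$ and $e^{sS_n}$ just established, not $W_n\ge0$ alone. Your Duhamel identity
\[
e^{t\hat L_n^{\ast}}f-e^{tS_n}f=\int_0^t e^{(t-s)\hat L_n^{\ast}}\,W_n\, e^{sS_n}f\,ds
\]
supplies exactly this argument. So does your termwise comparison $Q_n^kf\le P_n^kf$, which works because $P_n$ and $Q_n$ share the common shift $c_n$. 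The identity also gives an explicit nonnegative formula for the gap between the two semigroups. Within the paper's framework, the same conclusion would come from Trotter again: $e^{tS_n}=\lim_N\bigl(e^{t\hat L_n^{\ast}/N}e^{-tW_n/N}\bigr)^N$, together with $0\le e^{-tW_n/N}g\le g$ for $g\ge 0$.

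\textbf{What each route buys.} Your series argument relies on all generators being bounded, which holds here by \eqref{V}, \eqref{W} and \eqref{cr}. The product-formula route is the one that extends to settings where only semigroup generators, not bounded operators, are available.
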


\begin{proof}
For the operator $e^{t \hat L_n^{\ast}}$ see the proof in Lemma 4.1, \cite{PZ-PPI}. We will prove the lemma for $e^{t S_n}$.
The operator
$$
A^i k^{(n)}(x_1, \ldots, x_n) \ := \ \int\limits_{\XXX} b(x_i, y) k^{(n)} (x_1, \ldots, x_{i-1}, y,
x_{i+1}, \ldots, x_n) \bar{\mathfrak{m}}(dy).
$$
is positive and bounded on $\XX_{n}$ for any $1\leq i\leq n$.
Set
\begin{align}
\begin{aligned}\label{21}
S^i k^{(n)}(x_1, \ldots, x_n) \ = &\int\limits_{\XXX} b(x_i, y) k^{(n)}(x_1, \ldots, x_{i-1}, y, x_{i+1}, \ldots,
x_n) \bar{\mathfrak{m}}( dy)\\
& - U(x_i) k^{(n)}(x_1, \ldots, x_n).
\end{aligned}
\end{align}
Using the Trotter formula for the sum $A+B$ of two bounded operators:
$$
e^{t(A+B)} = \lim\limits_{n \to \infty} \Big( e^{\frac{t A}{n}} e^{\frac{t B}{n}} \Big)^n
$$
we conclude that
\begin{equation}\label{20L}
e^{t \, S^{i} }f = \lim\limits_{n \to \infty} \Big( e^{t \frac{ A^{i}}{n}} e^{-t \frac{U}{n}} \Big)^n f \ge  e^{- t \, U_{\rm{max}}} \, e^{t \, A^{i}} f \ge 0
\end{equation}
for any non-negative function $f$. Here $U$ is the operator of multiplication on the positive bounded function $U(x)$, and $ U_{\rm{max}} = \sup\limits_{x \in \XXX}(V(x)+W(x))$.

Representations \eqref{korf}, \eqref{W-1}-\eqref{Sn} yield
$$
e^{t S_n} \ = \ \otimes_{i=1}^n \  e^{t \, S^{i}}.
$$
Then taking into account that
\begin{equation}\label{20L}
\otimes_{i=1}^n \ e^{- t \, U_{\rm{max}}} \, e^{t \, A^{i}}
\end{equation}
is a positive operator, we get the desired conclusion.

Relation $\hat {L_n^{\ast}} = S_n + W_n$ with $W_n \ge 0$ immediately implies estimate \eqref{positive}.
\end{proof}

As follows from (\ref{f}), the function $f^{(n)}$ is the sum of
functions of the form
\begin{equation}\label{32}
f_{i,j} (x_1, \ldots, x_n)  =  k^{(n-1)} (x_1,\ldots,\check{x_i},
\ldots, x_n)  b(x_i, x_j), \quad i\neq j.
\end{equation}

We suppose by induction that
$$
k^{(n-1)} (x_1, \ldots, x_{n-1}) \ \le \ K_{n-1}, \quad \text{for all } \; (x_1, \ldots, x_{n-1})\in {\XXX}^{n-1},\quad n\geq 2,
$$
where $K_n = D C^n (n!)^2$, and $D, C$ are some constants. Consequently,
\begin{equation}\label{34}
f_{i,j}(x_1, \ldots, x_n) \ \le \  K_{n-1} b(x_i, x_j),\quad (x_1, \ldots, x_{n})\in {\XXX}^{n}.
\end{equation}
Using (\ref{34}), the positivity of the operator $e^{t S_n}$ and estimate \eqref{positive} we have
\begin{align}
\begin{aligned}\label{36}
\big(e^{t S_n} f_{i,j} \big) (x_1, \ldots, x_{n})\ \le \ &  K_{n-1}\, \big(e^{t S_n}
b(\cdot_i, \cdot_j) \big)(x_1, \ldots, x_{n}) \\  \le \ &  K_{n-1} \, \big( e^{t \hat L_n^{\ast}}
b(\cdot_i, \cdot_j) \big)(x_1, \ldots, x_{n}).
\end{aligned}
\end{align}
Denote
\begin{align}
\begin{aligned}\label{21-bis}
{\mathcal{L}}^{i} k^{(n)}(x_1, \ldots, x_n) \ = &\int\limits_{\XXX} b(x_i, y) k^{(n)}(x_1, \ldots, x_{i-1}, y, x_{i+1}, \ldots,
x_n) \bar{\mathfrak{m}}( dy)\\
& - V(x_i) k^{(n)}(x_1, \ldots, x_n).
\end{aligned}
\end{align}
Using the critical regime condition \eqref{cr} we conclude that $e^{t {\mathcal{L}}^{i}}1\!\!1=1\!\!1$, $\forall i=1,\,\ldots, n,$ where
 $1\!\!1(x)\equiv 1$. Thus we get
\begin{equation}\label{36_1}
\left(e^{t \hat L_n^{\ast}}
b(\cdot_i, \cdot_j)\right) (x_1, \ldots, x_{n})\  =  \
\left(e^{t ( {\mathcal{L}}^i + {\mathcal{L}}^j)} b(\cdot_i, \cdot_j)\right) (x_1, \ldots, x_{n}).
\end{equation}
Note that the latter function depends only on variables $x_{i}$ and $x_{j}$.
\medskip

Notice that $e^{t \hat L_n^{\ast}} f_{i,j}$ is integrable with respect to $t$ on $\R_{+}$. Indeed, estimate \eqref{32}, condition \eqref{2b} and the identity
\begin{equation}\label{2BB}
 e^{t \hat L_n^{\ast}} b(x,y)   =   \mathbb{E}_{x,y} b(X(t), Y(t))
\end{equation}
imply that
\begin{equation}\label{2BB-bis}
\int_0^{\infty} e^{t \hat L_n^{\ast}} f_{i,j}(x_1, \ldots, x_{n}) \ dt \
\le  K_{n-1} \, H,
\end{equation}
where $H$ is the same constant as in \eqref{2b}.
Define
\begin{equation}\label{vij}
v^{(n)}_{i,j} \ = \ \int_0^{\infty} e^{t S_n} f_{i,j} \ dt,
\end{equation}
then    \eqref{36} and \eqref{2BB-bis} yield
\begin{equation}\label{vij-est}
v^{(n)}_{i,j} \ \le \ K_{n-1} \, H.
\end{equation}
\medskip

Starting from now, the proof of the main result completely repeats the reasoning given in the proof of Theorem 3.1 from \cite{PZ-EJP}. We briefly present next steps of the proof here for the reader's convenience.
Denote
\begin{equation}\label{k-def}
v^{(n)}  = \sum_{i \neq j} v^{(n)}_{i,j}=\int_0^{\infty} e^{t S_n} f^{(n)} dt, \qquad f^{(n)} = \sum_{i \neq j} f_{i,j},
\end{equation}
where $f_{i,j}$ was defined by \eqref{32}.
Next we prove that function $v^{(n)}$ is  a solution to \eqref{Last} in $\XX_{n}$.
It is easily seen from \eqref{vij} and induction procedure that $v^{(n)}\in\XX_{n}$. Since $e^{t S_n}$ is a strongly continuous semigroup we have
\begin{equation}\label{37A-bis}
e^{t S_n}f^{(n)} = f^{(n)} + S_n \int_{0}^{t}e^{s S_n}f^{(n)}ds.
\end{equation}
It was proved, see e.g. \cite{PZ-EJP}, that
the following limit holds in $\XX_{n}$:
\begin{equation}\label{2c-bis}
 e^{t \hat L_n^{\ast}}f^{(n)} \to 0, \quad t\to \infty.
\end{equation}
Consequently, using \eqref{positive} we conclude that
\begin{equation}\label{2c-bisS}
 e^{t S_n}f^{(n)} \to 0, \quad t\to \infty.
\end{equation}
A passage to the limit in \eqref{37A-bis}  as $t\to\infty$ together with \eqref{2c-bisS} shows  that $v^{(n)}$ defined in \eqref{k-def} can be taken as a solution $k^{(n)}$ to \eqref{Last}.

Since the function $f^{(n)}$ is the sum
of functions $f_{i,j}$, $i\neq j$ we deduce from \eqref{vij} that $ v^{(n)}$
is bounded by $n^2  K_{n-1} H$. Thus we get the recurrence inequality
\begin{equation}\label{50}
K_n \ \le \ n^2 K_{n-1} H,
\end{equation}
and by induction it follows that
\begin{equation}\label{49}
K_n \ \le \ H^n \, (n!)^2 \, k^{(1)}.
\end{equation}
Thus this solution $k^{(n)} = v^{(n)}$ with $v^{(n)}$ defined in \eqref{k-def} satisfies estimate
\begin{equation}\label{49A}
v^{(n)} (x_1, \ldots, x_n) \ \le \ H^n \, (n!)^2 \,  k^{(1)}.
\end{equation}

Moreover for \eqref{Phi} being a solution to the system
(\ref{Last}), any family of function of the form
$$
k_\varrho^{(1)} = \Phi_\varrho, \quad k^{(n)}_\varrho   =  \int\limits_0^{\infty} e^{t S_n} f^{(n)} \ dt  + \Upsilon^{(n)}_\varrho,\quad n \geq 2,
$$
is a solution to the system
(\ref{Last}), if  $\Upsilon^{(n)}_\varrho(x_1, \ldots, x_n)$ is an arbitrary function such that  $S_n \Upsilon^{(n)}_\varrho=0$. Taking
$$
\Upsilon^{(n)}_\varrho (x_1, \ldots, x_n) \ = \ \prod_{1}^{n} \Phi_\varrho(x_i)
$$
we conclude that
\begin{equation}\label{52}
\begin{array}{l}
k^{(1)}_\varrho = \Phi_\varrho,
\\[2mm]
k^{(n)}_\varrho = \int\limits_0^{\infty} e^{t  S_n} f^{(n)} dt  +  \prod\limits_{1}^{n} \Phi_\varrho(x_i) = v^{(n)}+\prod\limits_{1}^{n} \Phi_\varrho(x_i),\quad n\geq 2,
\end{array}
\end{equation}
is the desired solution to the stationary problem \eqref{Last} in the Banach spaces $(\XX_{n})_{n\geq 1}$.
To emphasize the dependence of $f^{(n)}$ on $\varrho$ (see \eqref{32}), we will use further notation $f_{\varrho}^{(n)}$  for $f^{(n)}$.

It follows from estimate \eqref{Phi-est} and formula \eqref{52} that for $\{ k_{\varrho}^{(n)} \}_{n\geq 1}$ the following recurrence inequality holds
\begin{equation}\label{53}
K_n \ \le \  n^2 K_{n-1} H \ + \ \varrho^n.
\end{equation}
Taking $L_n = \frac{K_n}{H^n (n!)^2}$ we get from \eqref{53}
$$
L_n \le L_{n-1}+ \frac{\varrho^n}{H^n (n!)^2} \le D \quad \forall \; n=1,2, \ldots; \qquad L_0=0.
$$
This yields
\begin{equation}\label{55}
K_n \ \le \ D H^n (n!)^2 \qquad \mbox{with} \; D = \sum\limits_{n=1}^{\infty} \frac{(\varrho / H)^n}{(n!)^2}.
\end{equation}
\medskip

To be certain that the constructed system $\{ k_{\varrho}^{(n)} \}_{n\geq 1}$ is a system of correlation functions, i.e. it corresponds to a probability measure $\mu^\varrho$ on the configuration space $\Gamma$, we will
prove below that $\{ k_{\varrho}^{(n)} \}_{n\geq 1}$ can be constructed as the limit when $t \to \infty$ of the system
of correlation functions $\{ k_{t}^{(n)}\}_{n\geq 1}$ associated with the solution to the Cauchy problem \eqref{59} with
the initial data \eqref{k0}.


We recall that by the variation of parameters formula we have relation \eqref{61A} for the solution to the Cauchy problem (\ref{59}).
On the other hand, we proved above the existence of the solution $\{ k_{\varrho}^{(n)} \}_{n\geq 1}$ of the stationary problem:
\begin{equation}\label{New1}
S_n  k_\varrho^{(n)} \ = \ - f_\varrho^{(n)},
\end{equation}
with
$$
f_\varrho^{(n)}(x_1, \ldots, x_n) \ = \ \sum_{i,j:\ i\neq j}
k_\varrho^{(n-1)}(x_1, \ldots,\check{x_i}, \ldots, x_n) \ b(x_i, x_j).
$$
This solution is given by formula \eqref{52}, and
\eqref{New1} implies the following relation
\begin{equation}\label{New2}
\left( e^{t S_n} -  1 \right) k_\varrho^{(n)} \ = \ -
\int\limits_0^t \frac{d}{ds} e^{(t-s) S_n} k_\varrho^{(n)} ds \
\ = \ - \int\limits_0^t e^{(t-s) S_n}  f_\varrho^{(n)} \ ds.
\end{equation}
Therefore from \eqref{61A} and \eqref{New2} we obtain
\begin{equation}\label{64}
k_{t}^{(n)} -  k_\varrho^{(n)} \ = \
e^{t S_n}(k_{0}^{(n)} - k_\varrho^{(n)}) \ + \  \int\limits_0^t
e^{(t-s) S_n} (f_{s}^{(n)} -  f_\varrho^{(n)}) \ ds.
\end{equation}
We will prove now that both terms in the right-hand side of (\ref{64}) converge to 0 in
the norm of $\XX_n$ as $t \to \infty$.

Formula (\ref{52}) and equality $S_n \prod\limits_{1}^{n} \Phi_\varrho(x_i) = 0$ yield
\begin{equation}\label{65}
e^{t S_n} \big( k_{0}^{(n)} - k_\varrho^{(n)} \big) \ = \ - e^{t S_n} v^{(n)} + \Big( e^{t S_n} k_{0}^{(n)} - \prod\limits_{1}^{n} \Phi_\varrho(x_i) \Big).
\end{equation}
The second term in \eqref{65} tends to 0 as $t \to \infty$ because by \eqref{Phi} we get
$$
e^{t S_1} k_{0}^{(1)}(x) = e^{t S_1} \varrho  \to \Phi_\varrho(x),
$$
and $ k_{0}^{(n)} (x_1, \ldots, x_n) = \prod\limits_{1}^{n} k_{0}^{(1)}(x_i) = \varrho^n$ (see \eqref{k0}).
According \eqref{k-def} the first term in the r.h.s. of  \eqref{65} can be rewritten as
\begin{equation*}\label{1termA}
e^{t S_n} \ v^{(n)} = \int_{0}^{\infty}e^{(t+s) S_n}f_\varrho^{(n)} \, ds =  \int_{t}^{\infty}e^{r S_n} f_\varrho^{(n)} \, dr  \le  \int_{t}^{\infty} e^{r  \hat L_n^{\ast}} f_\varrho^{(n)} \, dr .
\end{equation*}
Due to the structure \eqref{32} of the function $f_\varrho^{(n)}$ and the uniform convergence of the integral in \eqref{2b} we conclude that
\begin{equation}\label{1term}
||e^{t S_n} \ v^{(n)}||_{{\XX}_n}\to 0,\quad t\to\infty.
\end{equation}
The second term in the r.h.s. of   \eqref{64} also tends to 0, and it can be proved in the same way as in our previous works, see e.g. \cite{KPZh}.

Thus we proved the strong convergence (\ref{Th1-2}), and the proof of the second part of Theorem \ref{main} is completed.
\medskip

Now we go back to the first part of the Theorem \ref{main}, and the final step of the proof is to show that the system of correlation functions $\{k_\varrho^{(n)} \}_{n \ge 1}$ corresponds to a probability measure $\mu^\varrho$ on the configuration space $\Gamma$. For this  we have constructed above $k_\varrho^{(n)}$ as the limit when $t \to \infty$ of solution $ k_{t}^{(n)} $
of the Cauchy problem (\ref{59}) with initial data (\ref{k0}):
\begin{equation}\label{limk}
k^{(n)}_\varrho \ = \ \lim_{t\to\infty} k_{t}^{(n)}.
\end{equation}
Then one can prove that solution $ k_{t}^{(n)}$
of the Cauchy problem satisfies the Lenard positivity and the moment growth conditions, see \cite{L1}-\cite{L2}.
The detailed proof of this fact can be found in \cite{KKPZ}.
Finally, these conditions imply that for any  $\varrho >0$  there exists a unique probability measure $\mu^\varrho \in {\cal M}_{\rm{corr}}(\Gamma)$, whose correlation functions are $\{ k^{(n)}_\varrho\}_{n \ge 1}$.
This completed the proof of Theorem \ref{main}.

\end{document}